\newtheorem{theorem}{Theorem}
\newtheorem{definition}[theorem]{Definition}
\newcommand{\ap}{{\rm ap\,}}
\newcommand{\ipk}{{\rm ipk\,}}
\newcommand{\asc}{{\rm asc\,}}
\newcommand{\lpk}{{\rm lpk\,}}
\newcommand{\run}{{\rm run\,}}
\newcommand{\cyc}{{\rm cyc\,}}
\newcommand{\exc}{{\rm exc\,}}
\newcommand{\msn}{\mathfrak{S}_n}
\newcommand{\mqn}{\mathcal{Q}_n}
\newcommand{\msnn}{\mathfrak{S}_{2n}}
\newcommand{\z}{ \mathbb{Z}}
\newcommand{\as}{{\rm as\,}}
\newcommand{\stirling}[2]{\genfrac{[}{]}{0pt}{}{#1}{#2}}
\title{The $1/k$-Eulerian polynomials and $k$-Stirling permutations}
\author[S.-M.~Ma]{Shi-Mei~Ma}
\address{School of Mathematics and Statistics,
        Northeastern University at Qinhuangdao,
         Hebei 066004, P. R. China}
\email{shimeimapapers@163.com (S.-M. Ma)}
\author[T.~Mansour]{Toufik Mansour}
\address{Department of Mathematics, University of Haifa, 3498838 Haifa, Israel}
\email{toufik@math.haifa.ac.il (T. Mansour)}
\begin{document}
\maketitle

\begin{abstract}
In this paper, we establish a connection between the $1/k$-Eulerian polynomials introduced by Savage and Viswanathan ({\newblock Electron. J. Combin. 19 (2012), \#P9}) and $k$-Stirling permutations. We also introduce the dual set of Stirling permutations.
\bigskip

\noindent{\sl Keywords}: $k$-Stirling permutations; $1/k$-Eulerian polynomials; Ascent-plateau
\end{abstract}
\date{\today}

\section{Introduction}
For $k\geq 1$, the {\it $1/k$-Eulerian polynomials} $A_n^{(k)}(x)$ are defined by
\begin{equation}\label{Ankx-def01}
\sum_{n\geq 0}A_n^{(k)}(x)\frac{z^n}{n!}=\left(\frac{1-x}{e^{kz(x-1)}-x} \right)^{\frac{1}{k}}.
\end{equation}

Let $e=(e_1,e_2,\ldots,e_n)\in\z^n$. Let
$I_{n,k}=\left\{ e|0\leq e_i\leq (i-1)k\right\}$, which known as the set of $n$-dimensional {\it $k$-inversion sequences}.
The number of {\it ascents} of $e$ is defined by
$$\asc(e)=\#\left\{i:1\leq i\leq n-1\mid\frac{e_i}{(i-1)k+1}<\frac{e_{i+1}}{ik+1}\right\}.$$
Savage and Viswanathan~\cite{Savage12} showed that
\begin{equation}\label{Ankx-def02}
A_n^{(k)}(x)=\sum_{e\in I_{n,k}}x^{\asc(e)}.
\end{equation}

Let $\msn$ be the
symmetric group on the set $[n]=\{1,2,\ldots,n\}$ and
$\pi=\pi_1\pi_2\cdots\pi_n\in\msn$.
The number of {\it
excedances} of $\pi$ is $\exc(\pi):=\#\{i:1\leq i\leq n-1|\pi_i>i\}$.
Let $\cyc(\pi)$ be the number of {\it cycles} in the disjoint cycle representation of $\pi$.
In~\cite{FS70},
Foata and Sch\"utzenberger introduced a $q$-analog of the classical
Eulerian polynomials defined by
\begin{equation}\label{anxq-def}
A_n(x;q)=\sum_{\pi\in\msn}x^{\exc(\pi)}q^{\cyc(\pi)}.
\end{equation}
The polynomials $A_n(x;q)$ satisfy the recurrence relation
\begin{equation}\label{anxq-rr}
A_{n+1}(x;q)=(nx+q)A_{n}(x;q)+x(1-x)\frac{d}{d x}A_{n}(x;q),
\end{equation}
with the initial conditions $A_{1}(x;q)=1$ and $A_{2}(x;q)=q$ (see~\cite[Proposition~7.2]{Brenti00}).
Savage and Viswanathan~\cite[Section~1.5]{Savage12} discovered that
\begin{equation}\label{Ankx-def03}
A_n^{(k)}(x)=k^nA_n(x;1/k)=\sum_{\pi\in\msn}x^{\exc(\pi)}k^{n-\cyc(\pi)}.
\end{equation}
Let $A_n^{(k)}(x)=\sum_{j=0}^{n-1}a_{n,j}^{(k)}x^j$. It follows from~\eqref{anxq-rr} and~\eqref{Ankx-def03} that
\begin{equation}\label{anj-rr}
a_{n+1,j}^{(k)}=(1+kj)a_{n,j}^{(k)}+k(n-j+1)a_{n,j-1}^{(k)},
\end{equation}
with the initial condition $a_{1,0}^{(k)}=1$.

Let $\stirling{n}{k}$ be the Stirling number of the first kind, i.e., the number of
permutations in $\msn$ with precisely $k$ cycles. It is well known that
\begin{equation*}\label{Stirlingnumbers}
\sum_{k=0}^n\stirling{n}{k}x^k=\prod_{i=0}^{n-1}(x+i).
\end{equation*}
Thus it follows from~\eqref{Ankx-def03} that
\begin{equation*}\label{Ank1}
A_n^{(k)}(1)=\prod_{i=1}^{n-1}(ik+1)\quad\textrm{for $n\ge 1$}.
\end{equation*}
Since $\prod_{i=1}^{n-1}(ik+1)$ also count $k$-Stirling permutations of order $n$ (see~\cite{Janson11,Kuba12}), it is natural
to consider the following question:  Is there existing a connection between $A_n^{(k)}(x)$ and $k$-Stirling permutations?
The main object of this paper is to provide a solution to this problem.

\section{$k$-Stirling permutations and their longest ascent-plateau}
In the following discussion, we always let $j^i=\underbrace{j,j,\ldots,j}_i$ for $i,j\geq 1$.
Stirling permutations were defined by Gessel
and Stanley~\cite{Gessel78}. A {\it Stirling permutation} of order $n$ is a permutation of the multiset $\{1^2,2^2,\ldots,n^2\}$ such that
for each $i$, $1\leq i\leq n$, all entries between the two occurrences of $i$ are larger than $i$.
We call a permutation of the multiset
$\{1^k,2^k,\ldots,n^k\}$ a $k$-{\it Stirling permutation} of order $n$ if for each $i$, $1\leq i\leq n$,
all entries between the two occurrences of $i$ are at least $i$.
Denote by $\mqn(k)$ the set of $k$-{\it Stirling permutation} of order $n$.
Clearly, $\mqn(1)=\msn$ and $\mqn(2)$ is the set of ordinary Stirling permutations of order $n$.

For $\sigma=\sigma_1\sigma_2\cdots\sigma_{2n}\in\mqn(2)$, an occurrence of an {\it ascent} (resp. {\it plateau}) is an index $i$ such that $\sigma_i<\sigma_{i+1}$ (resp. $\sigma_i=\sigma_{i+1}$).
The reader is referred to~\cite{Bona08,Janson11,Kuba12,Remmel14} for recent
progress on the study of patterns in Stirling permutations.

\begin{definition}
Let $\sigma=\sigma_1\sigma_2\cdots\sigma_{kn}\in \mqn(k)$.
We say that an index $i\in \{2,3,\ldots,nk-k+1\}$ is a longest ascent-plateau if $$\sigma_{i-1}<\sigma_i=\sigma_{i+1}=\sigma_{i+2}=\cdots=\sigma_{i+k-1}.$$
\end{definition}

Let $\ap(\sigma)$ be the number of the longest ascent-plateau of $\sigma$.
For example, $\ap(1122\textbf{3}3321)=1$.

Now we present the main results of this paper.
\begin{theorem}\label{thm:01}
For $n\geq 1$ and $k\geq 2$, we have
\begin{equation*}\label{Ankx-stirling}
A_n^{(k)}(x)=\sum_{\sigma\in \mqn(k)}x^{\ap(\sigma)}.
\end{equation*}
\end{theorem}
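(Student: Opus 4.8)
The plan is to show that the polynomial $Q_n(x):=\sum_{\sigma\in\mqn(k)}x^{\ap(\sigma)}$ obeys the same recurrence~\eqref{anj-rr} and the same initial value as $A_n^{(k)}(x)$; since both are thereby determined uniquely, they must agree. The engine is a recursive construction of $k$-Stirling permutations. Since $n+1$ is the largest letter, the defining condition of $\mathcal{Q}_{n+1}(k)$ forces the $k$ copies of $n+1$ to be mutually adjacent (anything strictly between two of them would have to be $\ge n+1$, hence equal to $n+1$), so every $\sigma\in\mathcal{Q}_{n+1}(k)$ arises in exactly one way by inserting the block $(n+1)^k$ into one of the $kn+1$ gaps of a unique $\sigma'\in\mqn(k)$, the inverse map being deletion of this block. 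First I would record this bijection $\mathcal{Q}_{n+1}(k)\leftrightarrow\mqn(k)\times\{0,1,\ldots,kn\}$.

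The heart of the argument is to track how $\ap$ changes under a single insertion. I would fix $\sigma'$ with $\ap(\sigma')=j$ and classify the $kn+1$ gaps. Two observations drive everything. First, the inserted block $(n+1)^k$ is itself a plateau of length $k$ of the maximal letter, so it becomes a longest ascent-plateau precisely when it is not placed at the very front (any preceding letter is smaller than $n+1$); thus insertion at gap $0$ contributes nothing while every other insertion contributes $+1$ from the block. Second, insertion can only destroy an existing longest ascent-plateau and never creates one among the old letters, since the block separates letters rather than merging equal copies, and the unique letter whose predecessor changes acquires the larger predecessor $n+1$. Concretely, a longest ascent-plateau of $\sigma'$ at a letter $v$, occupying positions $p,\ldots,p+k-1$ with $\sigma'_{p-1}<v$, is destroyed exactly by the $k$ insertions into the gap just before it (which replaces its predecessor by $n+1$) and into the $k-1$ gaps interior to it (which split its block into pieces shorter than $k$); all other insertions leave it intact.

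I would then verify that these $k$ destroying gaps are pairwise disjoint across the $j$ longest ascent-plateaus of $\sigma'$ and that none of them is gap $0$. Counting follows: gap $0$ keeps $\ap$ unchanged; each of the $kj$ destroying gaps contributes $+1$ from the block and $-1$ from the destroyed plateau, for net change $0$; and the remaining $(kn+1)-1-kj=k(n-j)$ gaps contribute $+1$. Hence exactly $1+kj$ insertions preserve $\ap(\sigma')=j$ while exactly $k(n-j)$ insertions raise it to $j+1$. Summing over $\sigma'\in\mqn(k)$ yields
\begin{equation*}
Q_{n+1}(x)=\sum_{\sigma'\in\mqn(k)}\bigl[(1+k\,\ap(\sigma'))\,x^{\ap(\sigma')}+k(n-\ap(\sigma'))\,x^{\ap(\sigma')+1}\bigr],
\end{equation*}
and extracting the coefficient of $x^j$ recovers exactly~\eqref{anj-rr}. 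Together with $Q_1(x)=1=A_1^{(k)}(x)$ (the sole member $1^k$ of $\mathcal{Q}_1(k)$ has its plateau at the front, so $\ap=0$), this finishes the proof.

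The main obstacle I anticipate is the bookkeeping in the second observation: one must argue cleanly that insertion never manufactures a new ascent-plateau among the letters $\le n$, that each old longest ascent-plateau owns exactly $k$ destroying gaps, and that these gap-sets are disjoint and avoid gap $0$, so that the global tally splits precisely as $1+kj$ versus $k(n-j)$.
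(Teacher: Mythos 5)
Your proposal is correct and follows essentially the same route as the paper: both establish the recurrence \eqref{anj-rr} for $\sum_{\sigma}x^{\ap(\sigma)}$ by inserting the block $(n+1)^k$ into one of the $kn+1$ gaps of a $k$-Stirling permutation of order $n$, identifying the $1+kj$ gaps that leave $\ap$ unchanged (the front plus the $k$ gaps attached to each of the $j$ longest ascent-plateaus) and the $k(n-j)$ gaps that raise it by one. Your version merely spells out more explicitly than the paper why the ``preserving'' gaps preserve (new block becomes a longest ascent-plateau while the old one is destroyed, net zero) and why the destroying gap-sets are disjoint, which is a welcome tightening of the same argument rather than a different proof.
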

\begin{proof}
Let $$T(n,j;k)=\#\{\sigma\in \mqn(k):\ap(\sigma)=j\}.$$
There are two ways in which a permutation $\widetilde{\sigma}\in\mathcal{Q}_{n+1}(k)$ with the number of the longest ascent-plateau equals $j$
can be obtained from a permutation $\sigma\in\mathcal{Q}_{n}(k)$.
\begin{enumerate}
\item [(a)] If the number of the longest ascent-plateau of $\sigma$ equals $j$, then we can insert $k$ copies of $(n+1)$ into $\sigma$
without increasing the number of the longest ascent-plateau.
Let $i$ be one of the longest ascent-plateau of $\sigma$. Then we can
insert $k$ copies of $(n+1)$ before $\sigma_i$ or after $\sigma_{t}$, where $i\leq t\leq i+k-2$.
Moreover, the $k$ copies of $(n+1)$ can also be inserted into the front of $\sigma$.
This accounts for $(1+kj)T(n,j;k)$ possibilities.
\item [(b)] If the number of the longest ascent-plateau of $\sigma$ equals $j-1$,
then we insert $k$ copies of $(n+1)$ into the remaining $1+kn-(1+k(j-1))=k(n-j+1)$ positions.
This gives $k(n-j+1)T(n,j-1;k)$ possibilities.
\end{enumerate}
Hence
$$T(n+1,j;k)=(1+kj)T(n,j;k)+k(n-j+1)T(n,j-1;k).$$
Clearly, $T(n,0;k)=1$, corresponding to the permutation $n^k(n-1)^k\cdots1^k$.
Therefore, the numbers $T(n,j;k)$ satisfy the same recurrence relation
and initial conditions as $a_{n,j}^{(k)}$, so they agree.
\end{proof}

Define $${\mathcal{Q}}^0_{n}(k)=\{0\sigma: \sigma\in\mqn(k)\}.$$
Therefore, for $\sigma\in{\mathcal{Q}}^0_{n}(k)$, we let $\sigma_0=0$ and the indices of the longest ascent-plateau belong to $\{1,2,3,\ldots,nk-k+1\}$.
For example,
$\ap(0\textbf{1}12\textbf{3}32)=2$.

Define $$x^nA_n^{(k)}\left(\frac{1}{x}\right)=\sum_{j=1}^nb_{n,j}^{(k)}x^j.$$
Then $b_{n,j}^{(k)}=a_{n,n-j}^{(k)}$. It follows from~\eqref{anj-rr} that
$$b_{n+1,j}^{(k)}=kjb_{n,j}^{(k)}+(kn-kj+k+1)b_{n,j-1}^{(k)}.$$
Along the same lines of the proof of Theorem~\ref{thm:01}, we get the following result.
\begin{theorem}\label{thm:02}
For $n\geq 1$ and $k\geq 2$, we have
\begin{equation*}\label{Ankn-j}
x^nA_n^{(k)}\left(\frac{1}{x}\right)=\sum_{\sigma\in {\mathcal{Q}}^0_{n}(k)}x^{\ap(\sigma)}.
\end{equation*}
\end{theorem}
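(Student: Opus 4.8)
The plan is to mirror the insertion argument used for Theorem~\ref{thm:01}, working this time on the augmented words $0\sigma$ and tracking the recurrence $b_{n+1,j}^{(k)}=kjb_{n,j}^{(k)}+(kn-kj+k+1)b_{n,j-1}^{(k)}$ recorded just above. Set $S(n,j;k)=\#\{\sigma\in{\mathcal{Q}}^0_{n}(k):\ap(\sigma)=j\}$; it suffices to show that $S$ obeys this recurrence together with the correct initial data. For the base case, $\mqn(k)$ with $n=1$ consists only of $1^k$, so ${\mathcal{Q}}^0_{1}(k)=\{01^k\}$, and since $0<1=1=\cdots=1$ the single word has a longest ascent-plateau at index $1$; hence $S(1,1;k)=1$ and $S(1,j;k)=0$ otherwise, which agrees with $b_{1,1}^{(k)}=a_{1,0}^{(k)}=1$.

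For the recurrence I would again build every $\widetilde\sigma\in{\mathcal{Q}}^0_{n+1}(k)$ from a unique $\sigma\in{\mathcal{Q}}^0_{n}(k)$ by inserting the block $(n+1)^k$, which must stay consecutive since $n+1$ is maximal. The word $0\sigma$ has length $nk+1$, and since the leading $0$ is frozen at the front there are exactly $nk+1$ admissible gaps, one after each symbol. Because $n+1$ exceeds every other entry, inserting the block always creates a new longest ascent-plateau at the block itself. Exactly as in Theorem~\ref{thm:01}, for each of the $j$ longest ascent-plateaux of $0\sigma$ there are precisely $k$ gaps---immediately before the plateau or strictly inside it---at which the insertion destroys the old plateau while simultaneously creating the new one, leaving $\ap$ unchanged; these account for $kj$ gaps, and every remaining gap strictly increases $\ap$ by one.

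The one genuinely new point---and the place I expect the bookkeeping to be delicate---is the gap immediately after the frozen $0$, which has no analogue in Theorem~\ref{thm:01} (there the corresponding front insertion was always $\ap$-preserving because index $1$ was not counted, contributing the extra $+1$ in $1+kj$). Here index $1$ is counted, so this gap behaves in two ways: if $0\sigma$ already has a longest ascent-plateau at index $1$ (equivalently $\sigma_1=\cdots=\sigma_k$), the gap is one of the $k$ gaps of that plateau and is $\ap$-preserving; otherwise inserting there creates a fresh index-$1$ plateau without destroying anything, so it is $\ap$-increasing. The hard part will be checking that in \emph{both} cases the total number of $\ap$-preserving gaps is exactly $kj$, so the number of $\ap$-increasing gaps is $(nk+1)-kj$. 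Starting from a word with $\ap=j-1$ this gives $(nk+1)-k(j-1)=kn-kj+k+1$ increasing gaps, and starting from a word with $\ap=j$ it gives $kj$ preserving gaps; summing over the two origins yields $S(n+1,j;k)=kjS(n,j;k)+(kn-kj+k+1)S(n,j-1;k)$, matching the recurrence for $b_{n,j}^{(k)}$ and completing the induction. The main obstacle is thus entirely the leading-$0$ gap: verifying that its case-dependent behavior nonetheless pins the preserving-gap count at $kj$, which is precisely the shift that turns the $1+kj$ of Theorem~\ref{thm:01} into $kj$ and correspondingly augments $k(n-j+1)$ to $kn-kj+k+1$.
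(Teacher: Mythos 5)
Your proposal is correct and follows essentially the same route as the paper, which simply states that Theorem~\ref{thm:02} is proved ``along the same lines'' as Theorem~\ref{thm:01} via the recurrence for $b_{n,j}^{(k)}$; you carry out exactly that insertion argument, and your case analysis of the gap after the leading $0$ (it is one of the $k$ preserving gaps of an index-$1$ plateau when one exists, and is $\ap$-increasing otherwise, so the preserving count is $kj$ either way) already completes the verification you flag as the delicate point.
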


\section{The dual set of Stirling permutations}
For convenience, we let $\mqn=\mqn(2)$.
Let $\sigma=\sigma_1\sigma_2\cdots\sigma_{2n}\in\mqn$.
Let $\Phi$ be the bijection which map each first occurrence of letter $j$ in $\sigma$ to $2j$ and the
second occurrence of letter $j$ in $\sigma$ to $2j-1$,
where $j\in [n]$. For example, $\Phi(221331)=432651$.
The {\it dual set} $\Phi(\mqn)$ of $\mqn$ is defined by $$\Phi(\mqn)=\{\pi: \sigma\in\mqn, \Phi(\sigma)=\pi\}.$$
Clearly, $\Phi(\mqn)$ is a subset of $\msnn$. Let $ab$ be an ascent in $\sigma$, so $a<b$.
Using $\Phi$, we see that $ab$ is maps into $(2a-1)(2b-1)$, $(2a-1)(2b)$, $(2a)(2b-1)$ or $(2a)(2b)$, and vice versa.
Let $\as(\sigma)$ (resp. $\as(\pi)$) be the number of ascents of $\sigma$ (resp. $\pi$).
Then $\Phi$ preserving ascents, i.e., $\as(\sigma)=\as(\Phi(\sigma))=\as(\pi)$.
Hence the well known {\it Eulerian polynomial of second kind} $P_n(x)$ (see~\cite[A008517]{Sloane}) has the expression
$$P_n(x)=\sum_{\pi\in\Phi(\mqn)}x^{\as(\pi)}.$$

Perhaps one of the most important permutation statistics is the peaks statistic; see, e.g., \cite{Dilks09,Ma12} and the references contained therein.
Let $\pi=\pi_1\pi_2\cdots\pi_n\in\msn$. An {\it interior peak} in $\pi$ is
an index $i\in\{2,3,\ldots,n-1\}$ such that $\pi_{i-1}<\pi_i>\pi_{i+1}$.
Let $\ipk(\pi)$ denote the number of interior peaks in $\pi$.
A {\it left peak} in $\pi$ is an index $i\in[n-1]$ such that $\pi_{i-1}<\pi_i>\pi_{i+1}$, where we take $\pi_0=0$.
Denote by $\lpk(\pi)$ the number of left peaks in $\pi$. For example, $\ipk(21435)=1$ and $\lpk(21435)=2$.

As pointed out by Savage and Viswanathan~\cite[Section~4]{Savage12} that the numbers $a_{n,j}^{(2)}$ appear as A185410 in~\cite{Sloane},
and the numbers $a_{n,n-j}^{(2)}$ appear as A156919 in~\cite{Sloane}.
We can now present a unified characterization of these numbers.
\begin{theorem}
For $n\geq 1$, we have
\begin{equation}\label{An2x}
A_n^{(2)}(x)=\sum_{\pi\in\Phi(\mqn)}x^{\ipk(\pi)},
\end{equation}
\begin{equation}\label{An2x-reverse}
x^nA_n^{(2)}\left(\frac{1}{x}\right)=\sum_{\pi\in\Phi(\mqn)}x^{\lpk(\pi)}.
\end{equation}
\end{theorem}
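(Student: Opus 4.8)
The plan is to reduce both identities to the single statistic-preserving fact that $\Phi$ carries the longest ascent-plateau statistic on Stirling permutations to a peak statistic on the images, and then to invoke Theorems~\ref{thm:01} and~\ref{thm:02} with $k=2$. Concretely, for~\eqref{An2x} it suffices to prove that $\ap(\sigma)=\ipk(\Phi(\sigma))$ for every $\sigma\in\mqn$, since Theorem~\ref{thm:01} gives $A_n^{(2)}(x)=\sum_{\sigma\in\mqn}x^{\ap(\sigma)}$ and $\Phi$ is a bijection onto $\Phi(\mqn)$.

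The key observation driving everything is how $\Phi$ encodes occurrence-order as parity: a first occurrence of $j$ becomes the even value $2j$ and its second occurrence becomes the odd value $2j-1$. First I would treat the forward direction. For $k=2$ a longest ascent-plateau at position $i$ means $\sigma_{i-1}<\sigma_i=\sigma_{i+1}=j$; here $\sigma_i$ and $\sigma_{i+1}$ are necessarily the two adjacent copies of $j$, so $\Phi$ sends them to $(2j)(2j-1)$, a descent, while $\sigma_{i-1}\le j-1$ forces $\pi_{i-1}\le 2(j-1)<2j=\pi_i$. Thus $\pi_{i-1}<\pi_i>\pi_{i+1}$ is an interior peak of $\pi=\Phi(\sigma)$ at the same index $i$.

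The hard part will be the converse: showing every interior peak of $\pi$ arises this way. Here the Stirling condition (between the two copies of $j$ all entries exceed $j$) does the work. If $\pi_i=2j-1$ is a second occurrence, then the entry to its left is either the first copy of $j$ (value $2j$) or lies in the block separating the two copies (value $>2j-1$), so $\pi_{i-1}>\pi_i$ and $i$ cannot be a peak. If $\pi_i=2j$ is a first occurrence, then a peak forces $\pi_{i+1}<2j$; since the successor of a first occurrence is $\ge j$, the only possibility is the adjacent second copy $\sigma_{i+1}=j$, giving a plateau, and the left inequality $\pi_{i-1}<2j$ then forces $\sigma_{i-1}<j$. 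Hence the peak sits exactly at a longest ascent-plateau, and the two statistics agree.

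For the reverse identity~\eqref{An2x-reverse} I would start from Theorem~\ref{thm:02} with $k=2$, which expresses $x^nA_n^{(2)}(1/x)$ as $\sum x^{\ap(0\sigma)}$ over ${\mathcal{Q}}^0_{n}(2)$. Prepending $0$ creates at most one new longest ascent-plateau, at index $1$, and it appears precisely when $\sigma_1=\sigma_2$, so $\ap(0\sigma)=\ap(\sigma)+[\sigma_1=\sigma_2]$. On the image side, $\lpk(\pi)=\ipk(\pi)+[\pi_1>\pi_2]$, and since $\pi_1=2\sigma_1$ is always even while $\pi_2\ge 2\sigma_2-1\ge 2\sigma_1-1$, one checks that $\pi_1>\pi_2$ holds exactly when $\sigma_1=\sigma_2$. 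Combining with the equality $\ipk(\pi)=\ap(\sigma)$ from the first part yields $\lpk(\Phi(\sigma))=\ap(0\sigma)$, and summing over $\mqn$ completes the proof.
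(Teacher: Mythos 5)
Your proof is correct and follows essentially the same route as the paper: both establish that $\Phi$ sends each longest ascent-plateau to an interior peak and nothing else does, and then invoke Theorems~\ref{thm:01} and~\ref{thm:02}. Your case analysis on first versus second occurrences actually spells out the details the paper compresses into ``one can easily verify'' and ``similarly,'' including the boundary bookkeeping $\ap(0\sigma)=\ap(\sigma)+[\sigma_1=\sigma_2]$ and $\lpk(\pi)=\ipk(\pi)+[\pi_1>\pi_2]$ for the left-peak identity.
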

\begin{proof}
Recall that an occurrence of a {\it pattern} $\tau$ in a sequence $\pi$ is defined as a subword
in $\pi$ whose letters are in the same relative order as those in $\tau$.

Let $\sigma\in\mqn$ and let $\Phi(\sigma)=\pi$.
Let $$C=\{112,211,122,221,213,312,123,321\}.$$
For all $\sigma\in\mqn$, we see that all patterns of length three of $\sigma$ are belong to $C$.
Let $abb$ be an occurrence of the pattern $122$ in $\sigma$, so $a<b$.
Using $\Phi$, we see that $abb$ is maps to either $(2a-1)(2b)(2b-1)$ or $(2a)(2b)(2b-1)$, which is an interior peak of the pattern $132$.
Moreover, one can easily verify that interior peaks can not be generated by the other patterns.
Recall that an occurrence of the longest ascent-plateau in Stirling permutations is an occurrence of the pattern $122$.
Then we get~\eqref{An2x} by using Theorem~\ref{thm:01}. Similarly, from Theorem~\ref{thm:02}, we get~\eqref{An2x-reverse}.
\end{proof}

For $n\geq 1$, we define $C_n(x)$ by
\begin{equation}\label{def:Cnx}
(1+x)C_n(x)=xA_n^{(2)}(x^2)+x^{2n}A_n^{(2)}\left(\frac{1}{x^2}\right).
\end{equation}
Set $C_0(x)=1$.
It follows from~\eqref{Ankx-def01} that
$$C(x,z)=\sum_{n\geq0} C_n(x)\frac{z^n}{n!}=\frac{{e^{z \left( x-1 \right)  \left( 1+x \right) }}+x}{1+x}\sqrt {{\frac {1-x^2}{{e^{2\,z \left( x-1
\right) \left( 1+x \right) }}-x^2}}}.$$

The first few $C_n(x)$ are given as follows:
\begin{align*}
  C_1(x)&=x, \\
  C_2(x)&=x+x^2+x^3, \\
  C_3(x)&=x+3x^2+7x^3+3x^4+x^5,\\
  C_4(x)&=x+7x^2+29x^3+31x^4+29x^5+7x^6+x^7,\\
  C_5(x)&=x+15x^2+101x^3+195x^4+321x^5+195x^6+101x^7+15x^8+x^9.
\end{align*}

Let $\pi=\pi_1\pi_2\cdots\pi_n\in\msn$.
We say that $\pi$ changes direction at position $i$ if either $\pi_{i-1}<\pi_i>\pi_{i+1}$, or
$\pi_{i-1}>\pi_i<\pi_{i+1}$, where $i\in\{2,3,\ldots,n-1\}$. We say that $\pi$ has $k$ {\it alternating runs} if there are $k-1$ indices $i$ where $\pi$ changes direction~(see~\cite[A059427]{Sloane}). Let $\run(\pi)$ denote the number of alternating runs of $\pi$. For example, $\run(214653)=3$.
There is a large literature devoted to the distribution of alternating runs. The reader is referred to~\cite{CW08,Ma122} for recent results on this subject.

We can now conclude the following result.
\begin{theorem}
For $n\geq 1$, we have
\begin{equation}\label{Cnx:run}
C_n(x)=\sum_{\pi\in\Phi(\mqn)}x^{\run(\pi)}.
\end{equation}
\end{theorem}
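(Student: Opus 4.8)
The plan is to reduce the claim to the two peak identities already established in \eqref{An2x} and \eqref{An2x-reverse}, and then to exploit the defining relation \eqref{def:Cnx}. Replacing $x$ by $x^2$ in those two identities gives
\[
xA_n^{(2)}(x^2)=\sum_{\pi\in\Phi(\mqn)}x^{2\ipk(\pi)+1},\qquad x^{2n}A_n^{(2)}\left(\frac{1}{x^2}\right)=\sum_{\pi\in\Phi(\mqn)}x^{2\lpk(\pi)},
\]
so by \eqref{def:Cnx} it suffices to prove the pointwise identity
\[
x^{2\ipk(\pi)+1}+x^{2\lpk(\pi)}=(1+x)\,x^{\run(\pi)}\qquad\text{for every }\pi\in\Phi(\mqn).
\]
Summing this over $\Phi(\mqn)$ recovers $(1+x)C_n(x)=(1+x)\sum_{\pi}x^{\run(\pi)}$, and cancelling the nonzero polynomial factor $1+x$ yields \eqref{Cnx:run}.

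The key structural input, which I expect to be the main obstacle, is the assertion that every $\pi\in\Phi(\mqn)$ ends in a descent; this is precisely what rules out the configurations for which the pointwise identity would fail. To prove it, write $\sigma\in\mqn$ with $\Phi(\sigma)=\pi$. The last letter $\sigma_{2n}$ is necessarily a second occurrence, so $\Phi$ sends it to an odd value $2v-1$. If $\sigma_{2n-1}=\sigma_{2n}=v$, then $\sigma_{2n-1}$ is the first occurrence of $v$ and maps to $2v>2v-1$; otherwise $\sigma_{2n-1}=w$ lies strictly between the two occurrences of $v$, which forces $w>v$ by the Stirling condition, and then $\Phi(\sigma)_{2n-1}\in\{2w-1,2w\}$ exceeds $2v-1$. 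In every case $\pi_{2n-1}>\pi_{2n}$.

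With this lemma in hand the pointwise identity follows from a short case analysis based on the standard relation $\run(\pi)=1+\ipk(\pi)+\mathrm{iv}(\pi)$, where $\mathrm{iv}(\pi)$ counts interior valleys, together with $\lpk(\pi)=\ipk(\pi)+[\pi_1>\pi_2]$. Since $\pi$ ends in a descent, its decomposition into maximal monotone runs terminates in a decreasing block, and this fixes the parity of the turning points. If $\pi$ begins with a descent, then $\ipk(\pi)=\mathrm{iv}(\pi)$, so $\run(\pi)=2\ipk(\pi)+1$ and $\lpk(\pi)=\ipk(\pi)+1$; if $\pi$ begins with an ascent, then $\mathrm{iv}(\pi)=\ipk(\pi)-1$, so $\run(\pi)=2\ipk(\pi)$ and $\lpk(\pi)=\ipk(\pi)$. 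In either case $\{2\ipk(\pi)+1,\,2\lpk(\pi)\}=\{\run(\pi),\,\run(\pi)+1\}$ as multisets, which is exactly the desired pointwise identity. The bookkeeping of peaks against valleys along the run decomposition is routine; the genuinely essential step is the descent-at-the-end lemma, without which permutations ending in an ascent would contribute the spurious terms $x^{\run(\pi)-1}+x^{\run(\pi)}$ instead.
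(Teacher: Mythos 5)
Your proof is correct and follows essentially the same route as the paper: both reduce via \eqref{def:Cnx} to the identity $(1+x)C_n(x)=\sum_\pi x^{2\ipk(\pi)+1}+\sum_\pi x^{2\lpk(\pi)}$ and then exploit the fact that every $\pi\in\Phi(\mqn)$ ends in a descent, which forces $\run(\pi)=\ipk(\pi)+\lpk(\pi)$ (your case analysis on whether $\pi_1<\pi_2$ is exactly the paper's partition into the sets with $\lpk=\ipk$ and $\lpk=\ipk+1$). The only difference is that you supply explicit proofs of the descent-at-the-end lemma and of the run/peak/valley bookkeeping, both of which the paper merely asserts.
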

\begin{proof}
Define
\begin{align*}
  S_1& =\{\pi\in \Phi(\mqn): \lpk(\pi)=\ipk(\pi)\}, \\
  S_2& =\{\pi\in \Phi(\mqn): \lpk(\pi)=\ipk(\pi)+1\}.
\end{align*}
Then $\Phi(\mqn)$ can be partitioned into subsets $S_1$ and $S_2$.

From~\eqref{def:Cnx}, we have
\begin{align*}
(1+x)C_n(x)& =\sum_{\pi\in\Phi(\mqn)}x^{2\ipk(\pi)+1}+\sum_{\pi\in\Phi(\mqn)}x^{2\lpk(\pi)}\\
           & =x\sum_{\pi\in S_1}x^{\ipk(\pi)+\lpk(\pi)}+\sum_{\pi\in S_2}x^{\ipk(\pi)+\lpk(\pi)}+\sum_{\pi\in S_1}x^{\ipk(\pi)+\lpk(\pi)}+\\
           &x\sum_{\pi\in S_2}x^{\ipk(\pi)+\lpk(\pi)}\\
           &=(1+x)\sum_{\pi\in S_1}x^{\ipk(\pi)+\lpk(\pi)}+(1+x)\sum_{\pi\in S_2}x^{\ipk(\pi)+\lpk(\pi)}.
\end{align*}
Thus
$$C_n(x)=\sum_{\pi\in\Phi(\mqn)}x^{\ipk(\pi)+\lpk(\pi)}.$$
Note that all $\pi\in\Phi(\mqn)$ ends with a descent, i.e., $\pi_{2n-1}>\pi_{2n}$.
Hence~\eqref{Cnx:run} follows from the fact that $\run(\pi)=\ipk(\pi)+\lpk(\pi)$.
\end{proof}

\section{Concluding remarks}
It follows from~\eqref{Ankx-def02} and Theorem~\ref{thm:01}, we have
\begin{equation}\label{problem:01}
\sum_{e\in I_{n,k}}x^{\asc(e)}=\sum_{\sigma\in \mqn(k)}x^{\ap(\sigma)}.
\end{equation}
Combining~\eqref{Ankx-def03} and Theorem~\ref{thm:01}, we have
\begin{equation}\label{problem:02}
\sum_{\pi\in\msn}x^{\exc(\pi)}k^{n-\cyc(\pi)}=\sum_{\sigma\in \mqn(k)}x^{\ap(\sigma)}.
\end{equation}
It would be interesting to present a combinatorial proof of~\eqref{problem:01} or~\eqref{problem:02}.


\end{document}